\documentclass[11pt]{amsart}
\usepackage{epsfig}
\usepackage{graphics}
\usepackage{color}

\newtheorem{theorem}{Theorem}[section]

\newtheorem{lemma}[theorem]{Lemma}
\newtheorem{proposition}[theorem]{Proposition}
\newtheorem{corollary}[theorem]{Corollary}
\newtheorem{question}[theorem]{Question}

\newtheorem*{main thm}{Main Theorem}

\theoremstyle{definition}

\newtheorem{remark}[theorem]{Remark}

\theoremstyle{remark}


\def\mod{{\rm Mod}}

\def\bS {\Sigma_{g,n}}

\begin{document}

\newenvironment{prooff}{\medskip \par \noindent {\it Proof}\ }{\hfill
$\square$ \medskip \par}
    \def\sqr#1#2{{\vcenter{\hrule height.#2pt
        \hbox{\vrule width.#2pt height#1pt \kern#1pt
            \vrule width.#2pt}\hrule height.#2pt}}}
    \def\square{\mathchoice\sqr67\sqr67\sqr{2.1}6\sqr{1.5}6}
\def\pf#1{\medskip \par \noindent {\it #1.}\ }
\def\endpf{\hfill $\square$ \medskip \par}
\def\demo#1{\medskip \par \noindent {\it #1.}\ }
\def\enddemo{\medskip \par}
\def\qed{~\hfill$\square$}

\title[Long factorizations in mapping class groups]
{Arbitrarily long factorizations in mapping class groups}

\author[E. Dalyan, M. Korkmaz and M. Pamuk ]
{El\.{i}f Dalyan, Mustafa Korkmaz and Mehmetc\.{i}k Pamuk }

\address{(E.D.) Department of Mathematics, Hitit University,
 \c{C}orum, Turkey}
 \address{(M.K., M.P.) Department of Mathematics, Middle East Technical University, 06800
 Ankara, Turkey}
\email{elifdalyan@hitit.edu.tr} \email{korkmaz@metu.edu.tr} \email{mpamuk@metu.edu.tr}
\subjclass{Primary 57M..; Secondary 57M., 20E25, 30C20}
\date{\today}
\keywords{Mapping class groups, Lefschetz fibrations, Contact structures}

\begin{abstract}

On a compact oriented surface of genus $g$ with $n\geq 1$ boundary components,
$\delta_1, \delta_2,\ldots, \delta_n$, we consider positive factorizations of
the boundary multitwist $t_{\delta_1} t_{\delta_2} \cdots t_{\delta_n}$,
where $t_{\delta_i}$ is the positive Dehn twist about the boundary
$\delta_i$. We prove that for $g\geq 3$, the boundary multitwist $t_{\delta_1} t_{\delta_2}$
can be written as a product of arbitrarily large number of positive Dehn twists about
nonseparating simple closed curves, extending a recent result
of Baykur and Van Horn-Morris, who proved this result for $g\geq 8$.
This fact has immediate corollaries on the Euler characteristics of
the Stein fillings of contact three manifolds.

\end{abstract}

\maketitle
\setcounter{secnumdepth}{2}
\setcounter{section}{0}

\section{Introduction}
Let $\bS$ denote a compact connected oriented surface of genus $g$ with $n\geq 1$
boundary components, say $\delta_1, \delta_2,\ldots, \delta_n$, and let $\mod (\bS)$ denote the
mapping class group of $\Sigma_{g,n}$, the group of isotopy classes of self-diffeomorphisms
of $\Sigma_{g,n}$ fixing all points on the boundary. The study of the mapping class
group elements has important applications in low dimensional topology:
By the results of Giroux \cite{giroux} and Thurston-Winkelnkemper
\cite{{thurston-winkelnkemper}}, every open book decomposition
$(\bS, \Phi)$, where $\Phi \in \mod (\bS)$, of a closed oriented $3$-manifold $M$
admits a compatible contact structure and all contact structures on compact $3$-manifolds
come from open book decompositions. If the monodromy $\Phi$ can be written as a product of
positive Dehn twists, then the contact structure is Stein fillable. Writing $\Phi$ as a product
of positive Dehn twists provides a Stein filling of the contact $3$-manifold $M$ via Lefschetz
fibrations.

In the present paper we address the following question:
\emph{Is there a positive integer $N$ such that whenever the boundary multitwist
$t_{\delta_1} t_{\delta_2} \cdots t_{\delta_n}$ is written as a product of $k$ positive
nonseparating Dehn twists in $\mod (\bS)$, the integer $k$ must satisfy $k\leq N$?}
 Since any such factorization describes a Lefschetz fibration with $n$
disjoint sections of self-intersection $-1$, this question is equivalent to understanding whether or not
there is an upper bound on the number of singular fibers of Lefschetz fibrations admitting sections of
self-intersection $-1$?  Note that this is Question $2.3$ in \cite{auroux-problems}.

Recently, the above question is considered by Baykur and Van Horn-Morris~\cite{baykur-morris}:
They proved that for $g\geq 8$,
the boundary multitwist $t_{\delta_1} t_{\delta_2}$ in $\mod (\Sigma_{g,2})$ can be written as a product of arbitrarily large number
of positive Dehn twists about nonseparating simple closed curves.

In the present paper, we prove that the same conclusion can be drawn for all
$g\geq 3$.  For $g=2$, one needs an extra factor. The main result of our paper is the following theorem.

\begin{main thm}\label{main thm}
Let $a$ be a nonseparating simple closed curve on the genus--$g$ surface $\Sigma_{g,2}$ with two boundary
components $\delta_1$ and $\delta_2$.  In the mapping class group $\mod(\Sigma_{g,2})$, the multitwist
\begin{itemize}
  \item  [(i)]~ $t_{\delta_1} t_{\delta_2} t_a$ for $g=2$,
  \item  [(ii)]~ $t_{\delta_1} t_{\delta_2}$   for $g\geq 3$
\end{itemize}
can be written as a product of arbitrarily large number of positive Dehn twists about nonseparating
simple closed curves.
\end{main thm}

\begin{remark} \label{rmk:rm1}
It is then easy to conclude that in the mapping class group $\mod(\Sigma_{2,2})$,
the multitwist $t_{\delta_1}^2t_{\delta_2}^2$
can be written as a product of arbitrarily large number of positive Dehn twists about nonseparating
simple closed curves (see proof of Corollary~\ref{cor:1}).
It follows easily that if $g\geq 3$ and $k\geq 1$ then
$t_{\delta_1}^kt_{\delta_2}^k$ can be written as a product of arbitrarily large number of positive Dehn twists. The same holds for $g=2$ and $k\geq 2$.
\end{remark}

By capping off one of the boundary components, we obtain the following immediate corollary for surfaces
with one boundary component.

\begin{corollary} \label{cor:0.5}
Let $\Sigma_{g,1}$ be a compact connected oriented surface of genus $g$ with one boundary
component $\delta$. In the mapping class group $\mod(\Sigma_{g,1})$, the element
\begin{itemize}
  \item  [(i)]~ $t_{\delta}^2$ for $g=2$,
  \item  [(ii)]~ $t_{\delta}$   for $g\geq 3$
\end{itemize}
can be written as a product of arbitrarily large number of positive Dehn twists about nonseparating
simple closed curves.
\end{corollary}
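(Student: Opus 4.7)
The plan is to derive both parts of the corollary from the Main Theorem (together with Remark~\ref{rmk:rm1}) via the standard capping homomorphism. Gluing a disk $D$ along the boundary component $\delta_2 \subset \partial \Sigma_{g,2}$ and extending diffeomorphisms by the identity on $D$ induces a group homomorphism
$\psi : \mod(\Sigma_{g,2}) \to \mod(\Sigma_{g,1})$
under which $t_{\delta_1} \mapsto t_\delta$, while $t_{\delta_2} \mapsto 1$ because the image of $\delta_2$ bounds an embedded disk in $\Sigma_{g,1}$.

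The key verification is that for every nonseparating simple closed curve $c \subset \Sigma_{g,2}$, the image $c' \subset \Sigma_{g,1}$ is again nonseparating and essential, so that $\psi(t_c) = t_{c'}$ is a positive Dehn twist about a nonseparating simple closed curve. If $c'$ were to bound a disk in $\Sigma_{g,1}$, then $c$ would be isotopic to $\delta_2$ in $\Sigma_{g,2}$, contradicting the fact that $c$ is nonseparating. If $c'$ were separating in $\Sigma_{g,1}$, then exactly one component of $\Sigma_{g,1}\setminus c'$ would contain the interior of the capped disk, and removing that disk would display $c$ as separating in $\Sigma_{g,2}$, again a contradiction. In particular, $c'$ cannot be null-isotopic or boundary-parallel.

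With $\psi$ in hand, the corollary follows by pushing forward the factorizations supplied by the earlier results. For $g\geq 3$ and any integer $N$, part (ii) of the Main Theorem provides a factorization $t_{\delta_1} t_{\delta_2} = t_{c_1} t_{c_2}\cdots t_{c_k}$ in $\mod(\Sigma_{g,2})$ with $k \geq N$ and every $c_i$ nonseparating; applying $\psi$ yields $t_\delta = t_{c_1'} t_{c_2'}\cdots t_{c_k'}$ in $\mod(\Sigma_{g,1})$, proving part (ii). For $g=2$, Remark~\ref{rmk:rm1} supplies arbitrarily long positive nonseparating factorizations of $t_{\delta_1}^2 t_{\delta_2}^2$ in $\mod(\Sigma_{2,2})$; applying $\psi$ produces arbitrarily long positive nonseparating factorizations of $t_\delta^2$ in $\mod(\Sigma_{2,1})$, proving part (i).

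The only nontrivial step is the nonseparating-curve verification above; everything else is bookkeeping about the capping homomorphism, so I anticipate no substantive obstacle.
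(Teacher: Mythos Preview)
Your proof is correct and matches the paper's approach: the paper states this corollary as an immediate consequence of capping off one boundary component of $\Sigma_{g,2}$, applying the Main Theorem for $g\geq 3$ and Remark~\ref{rmk:rm1} for $g=2$, exactly as you do. Your added verification that nonseparating curves remain nonseparating under the capping homomorphism simply spells out what the paper leaves implicit.
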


In the mapping class group $\mod (\Sigma_g)$ of a closed orientable surface $\Sigma_g$
the identity element can be written as a product of positive
Dehn twists about nonseparating simple closed curves. It follows that
every element in $\mod (\Sigma_g)$ can be expressed as a product of
arbitrarily large number of nonseparating positive Dehn twists.
However, when $n\geq 1$, the situation is different; the identity element of $\mod (\Sigma_{g,n})$
admits no nontrivial factorization into a product of positive Dehn twists.

For $k\geq 1$, a factorization of the multitwist $t_{\delta_1}^k t_{\delta_2}^k\cdots t_{\delta_n}^k$
into a product of positive Dehn twists of the form
 \begin{equation} \label{eqn:1}
     t_{\delta_1}^k t_{\delta_2}^k\cdots t_{\delta_n}^k = \prod_{i=1}^r t_i
 \end{equation}
in the group $\mod(\Sigma_{g,n})$ describes a genus-$g$ Lefschetz fibration
$ X_g(r) \to S^2$ with $n$ disjoint sections such that the
self-intersection of each section is $-k$.  The Euler characteristic
of the total space $X_g(r)$ is given by
 \[
     \chi(X_g(r))= 2(2-2g) + r.
 \]

The next corollary, which is an improvement of the first part
of~\cite[Theorem 1.2]{baykur-morris}, follows from Remark~\ref{rmk:rm1}.

\begin{corollary}\label{cor:1}
For every $g\geq 3$, there is a family of genus-$g$ Lefschetz fibrations $X_g(r) \to S^2$
with two disjoint sections of self-intersection $-1$ such that
the set
$   \{ \chi(X_g(r)) \} $
of Euler characteristics is unbounded.
The same conclusion holds true for genus--$2$ Lefschetz fibrations but this
time with two disjoint sections of self-intersection $-2$.
\end{corollary}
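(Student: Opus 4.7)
The plan is to invoke the dictionary recorded around equation~(\ref{eqn:1}) between positive factorizations of the boundary multitwist $t_{\delta_1}^k t_{\delta_2}^k$ in $\mod(\Sigma_{g,2})$ and genus-$g$ Lefschetz fibrations $X_g(r)\to S^2$ with two disjoint sections of self-intersection $-k$ and total Euler characteristic $2(2-2g)+r$.

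For $g\geq 3$, Main Theorem~(ii) directly supplies, for every prescribed integer $r_0$, a positive factorization $t_{\delta_1}t_{\delta_2}=t_{c_1}t_{c_2}\cdots t_{c_r}$ in $\mod(\Sigma_{g,2})$ of length $r\geq r_0$ with each $c_i$ nonseparating. Feeding this into~(\ref{eqn:1}) with $k=1$ and $n=2$ produces a family $\{X_g(r)\}$ of genus-$g$ Lefschetz fibrations admitting two disjoint $(-1)$-sections and with unbounded Euler characteristic $\chi(X_g(r))=2(2-2g)+r$. For $g=2$, the analogous input is Remark~\ref{rmk:rm1}, which asserts that $t_{\delta_1}^2 t_{\delta_2}^2$ admits arbitrarily long positive nonseparating-Dehn-twist factorizations in $\mod(\Sigma_{2,2})$; applying~(\ref{eqn:1}) with $k=2$ and $n=2$ then yields the claimed genus-$2$ family with two disjoint $(-2)$-sections and unbounded Euler characteristic $r-4$. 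Modulo this Lefschetz-fibration translation, the only real task is to establish Remark~\ref{rmk:rm1}.

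To prove Remark~\ref{rmk:rm1}, I will start from the factorization $t_{\delta_1}t_{\delta_2}t_a=W_r$ of Main Theorem~(i), with $r$ arbitrarily large, and combine it with a second factorization obtained by applying a suitable $\phi\in\mod(\Sigma_{2,2})$ fixing $\partial\Sigma_{2,2}$ pointwise and sending $a$ to a companion nonseparating curve $a'=\phi(a)$. Because $t_{\delta_1}$ and $t_{\delta_2}$ are central in $\mod(\Sigma_{2,2})$, this gives
\[
W_r\cdot\phi(W_r)=t_{\delta_1}^2 t_{\delta_2}^2\cdot t_a t_{a'}.
\]
The main obstacle is to choose $a$, $a'$ together with an auxiliary (lantern- or chain-type) relation on $\Sigma_{2,2}$ that rewrites $t_a t_{a'}$—together with a few factors that can be absorbed into $W_r$ or $\phi(W_r)$—so as to leave a purely positive expression for $t_{\delta_1}^2 t_{\delta_2}^2$ of length growing linearly in $r$. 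Once this step is carried out, Remark~\ref{rmk:rm1} follows and the two halves of the corollary are finished by the Lefschetz-fibration dictionary.
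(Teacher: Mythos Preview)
Your treatment of the $g\geq 3$ case is fine and is exactly what the paper does: invoke Main Theorem~(ii), then translate via the dictionary around~(\ref{eqn:1}).

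The $g=2$ case, however, has a genuine gap. You correctly reduce to proving Remark~\ref{rmk:rm1}, but your proposed argument does not actually prove it. From $W_r=t_{\delta_1}t_{\delta_2}t_a$ and its conjugate you obtain $W_r\cdot\phi(W_r)=t_{\delta_1}^2t_{\delta_2}^2\,t_a t_{a'}$, and hence a positive factorization of $t_{\delta_1}^2t_{\delta_2}^2\,t_a t_{a'}$ of arbitrary length. To pass to $t_{\delta_1}^2t_{\delta_2}^2$ itself you must \emph{remove} the two extra positive twists $t_a$ and $t_{a'}$, i.e.\ absorb two \emph{negative} twists into the word while keeping it positive. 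You label this ``the main obstacle'' and gesture at a lantern- or chain-type relation, but you do not produce one, and it is not at all clear that such a trick exists in the generality you need: a lantern substitution trades three positive twists for four, not the reverse, and you have no control over which curves appear inside the black-box factorizations $W_r$. As written, this step is a hope, not an argument.

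The paper avoids this difficulty entirely by not working with the abstract output of Main Theorem~(i). Instead it returns to the explicit factorization~(\ref{eqn:3-1}) of $t_{\delta_1}t_{\delta_2}$ obtained in the course of proving the Main Theorem, squares it, and observes that the squared word can be rearranged (by commutation and conjugation, with no negative twists introduced) into the form
\[
t_{\delta_1}^2t_{\delta_2}^2 \;=\; t_4 t_3 t_2 t_1 t_1 t_2 t_3 t_4\, t_4 t_d t_3 t_4 \cdot D'',
\]
where $D''$ is a product of $28$ positive nonseparating Dehn twists. The leading block is exactly the element $\phi$ of Proposition~\ref{prop:tm} (with $x=c_4$), so that proposition immediately upgrades it to $12+10m$ positive twists, yielding factorizations of $t_{\delta_1}^2t_{\delta_2}^2$ of length $40+10m$ for every $m\geq 1$. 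The point is that the extra $t_d$ needed to complete $\phi$ is already present inside the explicit word, so no cancellation problem ever arises.
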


Given a genus--$g$ Lefschetz fibration $f:X \to S^2$ with a section $\sigma$ and with a regular
fiber $\Sigma$, the complement of a regular neighborhood of the union $\Sigma\cup \sigma$ is
a Stein filling of its boundary $M$ equipped with the induced tight contact structure
(cf.~\cite{ozbagci-stipsicz}).
It was conjectured in~\cite{ozbagci-stipsicz} that the set
\[
 \mathcal{C}_{(M, \xi)}= \{ \chi(X) \ | \ X \  \textrm{is a Stein filling of} \ (M, \xi)\}
\]
is finite. In~\cite{baykur-morris0} and~\cite{baykur-morris}, it was shown that this conjecture is false.
Our main result provides more counter examples to this conjecture.
Once we have our Main Theorem,
then this follows from~\cite{ozbagci-stipsicz}, Theorem~3.2. See also refrences therein,
e.g.~\cite{akbulut-ozbagci}, \cite{etnyre-honda} and \cite{loi-piergallini}.

\begin{corollary}\label{cor:2}
For every $g\geq 2$, there is a contact $3$-manifold $(M_g, \xi_g)$, $M_g\neq M_{g'}$ for $g\neq g'$, admitting
infinitely many pairwise non-diffeomorphic Stein fillings such that
the set $\mathcal{C}_{(M_g, \xi_g)}$ is unbounded.
\end{corollary}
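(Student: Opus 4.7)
The plan is to deduce Corollary~\ref{cor:2} from the Main Theorem by feeding the factorizations it produces into the Ozbagci--Stipsicz construction. First, for every $g\geq 2$, I would invoke Corollary~\ref{cor:1} (augmented by Remark~\ref{rmk:rm1} when $g=2$) to obtain a family of genus--$g$ Lefschetz fibrations $X_g(r)\to S^{2}$ with two disjoint sections, of self-intersection $-1$ for $g\geq 3$ and of self-intersection $-2$ for $g=2$, whose Euler characteristics $\chi(X_g(r))=2(2-2g)+r$ grow without bound as $r$ varies.

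Next, I would apply \cite[Theorem~3.2]{ozbagci-stipsicz} to each $X_g(r)$: removing an open regular neighborhood $N$ of the union $\Sigma\cup\sigma_1\cup\sigma_2$ of a regular fiber with the two sections yields a Stein filling $W_g(r)$ of its boundary, carrying the induced tight contact structure $(M_g,\xi_g)$. The key observation is that the induced open book on $\partial W_g(r)$ has page $\Sigma_{g,2}$ and monodromy $t_{\delta_1}t_{\delta_2}$ (respectively $t_{\delta_1}^{2}t_{\delta_2}^{2}$ when $g=2$), so the contact $3$-manifold $(M_g,\xi_g)$ depends only on $g$ and not on the factorization indexed by $r$.

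To conclude that $\mathcal{C}_{(M_g,\xi_g)}$ is unbounded I would compute Euler characteristics. Since $\Sigma\cup\sigma_1\cup\sigma_2$ has Euler characteristic $(2-2g)+2+2-2=4-2g$ and $\partial N$ is a closed $3$-manifold, a Mayer--Vietoris calculation yields
\[
\chi(W_g(r))=\chi(X_g(r))-\chi(N)=r-2g,
\]
which is unbounded in $r$; hence the $W_g(r)$ are pairwise non-diffeomorphic Stein fillings of $(M_g,\xi_g)$. Finally, to check $M_g\neq M_{g'}$ for $g\neq g'$, I would appeal to a topological invariant of the abstract open book $(\Sigma_{g,2},t_{\delta_1}t_{\delta_2})$ that detects the page genus, for instance $H_1(M_g)$ or the Seifert/JSJ decomposition of $M_g$.

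The substantive work is packaged inside the Ozbagci--Stipsicz theorem, so the only thing that really needs attention here is confirming that the contact boundary is genuinely independent of the chosen positive factorization; once this is in hand the remaining steps are a straightforward Euler characteristic count and a routine invariant comparison to separate $M_g$ from $M_{g'}$.
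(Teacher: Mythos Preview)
Your argument is correct and follows essentially the same route as the paper: feed the arbitrarily long positive factorizations from Corollary~\ref{cor:1} into the Ozbagci--Stipsicz construction to produce Stein fillings with unbounded Euler characteristic, and then separate the $M_g$ by a homological invariant.

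The one noteworthy difference is that the paper removes a neighborhood of the fiber together with a \emph{single} section $\sigma_g$, whereas you remove both sections $\sigma_1,\sigma_2$. Consequently the paper's $M_g$ is the boundary of a plumbing of just two disk bundles, and the paper states concretely that $H_1(M_g)\cong\Z^{2g}$, which immediately distinguishes the $M_g$. Your two-section variant is equally valid (the complement is still an allowable Lefschetz fibration over $D^2$ with bounded fiber, hence Stein by the Akbulut--Ozbagci/Loi--Piergallini results underlying \cite[Theorem~3.2]{ozbagci-stipsicz}), but you leave the computation of the separating invariant as a gesture toward $H_1$ or the JSJ decomposition; carrying out the $H_1$ computation explicitly, as the paper does in the one-section case, would make the last step self-contained.
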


The manifold $M_g$ is the boundary of a regular neighborhood of the union $\Sigma_g\cup \sigma_g$
of the regular fiber $\Sigma_g$ and the section $\sigma_g$ of the Lefschetz fibration $X_g(r) \to S^2$
provided by Corollary~\ref{cor:1}.
The first integral homology of $M_g$ is a free abelian group of rank $2g$. Thus,
$M_g$ and $M_{g'}$ are not diffeomorphic for $g\neq g'$.

\begin{remark}
Plamenevskaya~\cite{plamenevskaya} and Kaloti~\cite{kaloti} showed that
if a contact $3$-manifold $(M, \xi)$ can be supported by a planar open book, then  $\mathcal{C}_{(M, \xi)}$
must be finite.  Hence, the contact structure supported by the open book with monodromy
$t_{\delta_1} t_{\delta_2} $, $g\geq 3$, cannot be supported by a planar open book.
\end{remark}

The organization of this paper is as follows:  In Section $2$, we obtain some preliminary results.
In Section $3$, we prove our main theorem.
In the last section we extend our results to surfaces with more boundary components.

\vspace{0.2in} \noindent{\bf {Acknowledgements.}} We started working on the problem after
\.{I}nan\c{c} Baykur  gave a talk on \cite{baykur-morris} at METU in December 2012.  We are grateful to him for
his explanations and helpful comments on this paper. After writing the first draft of the paper, we were informed by
Naoyuki Monden that he also obtained part of our results by similar arguments.  We also thank Burak Ozbagci for
very helpful comments on  earlier drafts of this paper.


\section{Preliminary Results}

Throughout this paper, we consider diffeomorphisms and curves up to isotopy; if two diffeomorphisms (resp. curves)
$x$ and $y$ are isotopic, we say that $x$ is equal to $y$.
We always use the functional notation for the multiplication in $\mod (\bS)$; the product $f h$ means that
$h$ applied first.
We refer to \cite{korkmaz-survey} and \cite{ozbagci-stipsicz} for the basics on
mapping class groups of surfaces, Lefschetz fibrations and contact structures.

For a simple closed curve $a$ on an oriented surface $\bS$,
we denote by $t_a$ the right (or positive) Dehn twist about $a$.
The diffeomorphism $t_a$ is
obtained by cutting $\bS$ along $a$ and gluing back after twisting one of the sides to the right by $360$ degrees.
For simplicity, the Dehn twist about the simple closed curve $c_i$ is denoted by $t_i$.

We now state two preliminary lemmas, which will be used frequently.

\begin{lemma} \label{lem:chainnn}
Let $c_1,c_2, \ldots, c_{2h+1}$ be simple closed curves on an oriented surface such that $c_i$
intersects $c_{i+1}$ transversely at one point for $i=1, \ldots, 2h$, and
$c_i \cap c_j =\emptyset$ if $\mid i-j\mid > 1$. Let $a$ and $b$ be the boundary components of
a regular neighborhood of $c_1\cup c_2\cup  \cdots \cup c_{2h+1}$.
The following equality holds:
 \[
    (t_1 t_2 \cdots t_{2h+1})^{2h+2}= t_a t_b.
 \]
\end{lemma}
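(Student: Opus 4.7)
The plan is to recognize this as the classical odd chain relation in the mapping class group, and reduce the verification to a closed-form identity inside the subsurface cut out by the chain. First I would observe that a regular neighborhood $N$ of $c_1 \cup c_2 \cup \cdots \cup c_{2h+1}$ is a compact connected oriented subsurface of genus $h$ with exactly two boundary components, which are the curves $a$ and $b$; this follows from an Euler characteristic count (each $c_i$ contributes $-1$, each transverse intersection contributes $+1$, so $\chi(N) = (2h+1)(-1) + 2h = -1$, consistent with $\Sigma_{h,2}$) together with the fact that the chain has odd length (hence two boundary components rather than one). Since every Dehn twist appearing on either side of the proposed identity is supported inside $N$ and acts as the identity on $\partial N$, it is enough to establish the equality in $\mod(\Sigma_{h,2})$.

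Next I would invoke the double branched cover picture. Represent $\Sigma_{h,2}$ as the double cover of a disk $D$ branched over $2h+2$ interior marked points $p_1, \ldots, p_{2h+2}$, arranged in a row, with the hyperelliptic involution as the deck transformation. Choose the branch cuts so that each $c_i$ double-covers the arc $\alpha_i$ joining $p_i$ to $p_{i+1}$. Under this identification, $t_{c_i}$ is the lift of the half-twist $\sigma_i$ about $\alpha_i$ in the mapping class group $\mod(D, \{p_j\})$, i.e.\ the braid group $B_{2h+2}$. Hence the product $t_1 t_2 \cdots t_{2h+1}$ lifts $\sigma_1 \sigma_2 \cdots \sigma_{2h+1}$, and therefore $(t_1 \cdots t_{2h+1})^{2h+2}$ lifts the full twist $(\sigma_1 \cdots \sigma_{2h+1})^{2h+2}$, which is a well-known central element of $B_{2h+2}$ realized as a rigid rotation of $D$ by $2\pi$ supported near $\partial D$. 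The lift of this rotation to the double cover is precisely a positive Dehn twist about each of the two boundary components $a$ and $b$, giving $t_a t_b$.

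An alternative route, if one wishes to avoid the covering-space setup, is induction on $h$: the base case $h=0$ is the tautology $t_1^2 = t_a t_b$ on the annulus, and for the inductive step one isolates a smaller chain $c_1, \ldots, c_{2h-1}$ inside the given one, applies the inductive hypothesis there, and then handles the remaining two twists $t_{2h} t_{2h+1}$ using braid-like commutation and conjugation relations among Dehn twists.

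The main obstacle is bookkeeping: verifying that the lifted element is genuinely $t_a t_b$ with the correct orientation on both boundary components, rather than some twisted variant, requires care with the deck involution and the chosen orientations of $a$ and $b$. In the inductive approach the corresponding difficulty is keeping track of which curve plays the role of ``$a$'' and which plays ``$b$'' as the chain grows, and ensuring the identity closes up consistently. Either way the relation is classical (see for instance \cite{korkmaz-survey}), so in practice I would refer the reader there and indicate only the covering-space picture as a sketch.
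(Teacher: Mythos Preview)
The paper states this lemma without proof, treating it as the classical odd chain relation; your instinct at the end to simply cite the literature is precisely what the authors do. Your branched-double-cover sketch is the standard argument and is correct in outline: the full twist $(\sigma_1\cdots\sigma_{2h+1})^{2h+2}$ generates the center of $B_{2h+2}$, is supported near $\partial D$, and lifts to $t_a t_b$ because $\partial D$ has two preimages in the cover (an even number of branch points).

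One slip to fix: your Euler characteristic count is wrong. A circle has $\chi=0$, not $-1$, so the graph $c_1\cup\cdots\cup c_{2h+1}$ has
\[
\chi = 0 - 2h = -2h
\]
(start from $c_1$ and adjoin $2h$ further circles, each meeting the existing union in a single point), and hence $\chi(N)=-2h=2-2h-2=\chi(\Sigma_{h,2})$. Your formula $(2h+1)(-1)+2h=-1$ gives the wrong value and does not match $\Sigma_{h,2}$ for $h\geq 1$, even though your conclusion $N\cong\Sigma_{h,2}$ is correct.

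The inductive alternative you mention is more delicate than you suggest: passing from a chain of length $2h-1$ to one of length $2h+1$ replaces \emph{both} boundary curves of the subsurface, so the inductive hypothesis does not feed directly into the next case without substantial extra manipulation (one typically needs the lantern or star relation to bridge the gap). For the purposes of this paper a citation is the right choice; if you insist on a self-contained proof, keep the covering argument and correct the Euler characteristic.
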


\begin{lemma} \label{lem:chain}
Let $c_1, \ldots, c_n$ be simple closed curves on an oriented surface such that $c_i$
intersects $c_{i+1}$ transversely at one point for $i=1, \ldots, n-1$, and
$c_i \cap c_j =\emptyset$ if $\mid i-j\mid > 1$.  The following equality holds:
 \[
    (t_1 t_2 \cdots t_n)^{n+1}= (t_1 t_2 \cdots t_{n-1})^n t_n\cdots t_2 t_1 t_1t_2 \cdots t_n.
 \]
\end{lemma}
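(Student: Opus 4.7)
The intersection hypotheses give the braid relations $t_i t_{i+1} t_i = t_{i+1} t_i t_{i+1}$ and the commutations $t_i t_j = t_j t_i$ for $|i-j|\ge 2$; these are the only inputs the proof will use. Cancelling $P_n := t_1 t_2 \cdots t_n$ on the right of both sides reduces the statement to $P_n^n = P_{n-1}^n \cdot R_n$, where $P_{n-1} := t_1 \cdots t_{n-1}$ and $R_n := t_n t_{n-1} \cdots t_1$. The plan is to write $P_n = P_{n-1} t_n$ and prove by induction on $k$, for $1 \le k \le n$, the intermediate formula
\[
  (P_{n-1} t_n)^k \;=\; P_{n-1}^k \cdot \bigl(t_n t_{n-1} \cdots t_{n-k+1}\bigr).
\]
The case $k=1$ is immediate, and taking $k=n$ gives exactly the reduced identity.

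The inductive step rests on two slide identities, each a direct consequence of the basic relations: (a) $t_j P_{n-1} = P_{n-1} t_{j-1}$ for $2\le j \le n-1$, obtained by commuting $t_j$ past $t_1,\ldots,t_{j-2}$, applying one braid move to $t_j t_{j-1} t_j$, and pushing the resulting $t_j$ to the far left; and (b) $t_n P_{n-1} = P_{n-2} t_n t_{n-1}$, immediate since $t_n$ commutes with $t_1,\ldots,t_{n-2}$. Writing $S_k := t_n t_{n-1} \cdots t_{n-k+1}$ and successively applying (a) to the factors $t_{n-k+1}, t_{n-k+2}, \ldots, t_{n-1}$ of $S_k P_{n-1}$ (this list being empty when $k=1$), then applying (b) to the leading $t_n$, yields
\[
  S_k P_{n-1} \;=\; P_{n-2} \cdot t_n t_{n-1} \cdots t_{n-k}.
\]
Right-multiplying by $t_n$ and simplifying
\[
t_n t_{n-1} \cdots t_{n-k} \cdot t_n \;=\; t_{n-1} t_n t_{n-1} t_{n-2} \cdots t_{n-k}
\]
(by commuting the trailing $t_n$ past $t_{n-2},\ldots,t_{n-k}$ and one application of $t_n t_{n-1} t_n = t_{n-1} t_n t_{n-1}$) gives $S_k P_{n-1} t_n = P_{n-1} \cdot S_{k+1}$, which closes the induction.

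The main obstacle is the combinatorial bookkeeping in iterating (a): one must check that the shifted letters land in the correct order $t_{n-2} t_{n-3} \cdots t_{n-k}$ to the right of $P_{n-1}$ as the slide moves are performed from the rightmost factor inward. Each individual move is a single application of a braid or commutation relation, so no step is intricate, but the indices demand careful tracking.
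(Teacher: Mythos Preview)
Your proof is correct and follows the same route the paper indicates: the paper's own proof is the single sentence ``follows easily from the braid relations $t_i t_{i+1} t_i = t_{i+1} t_i t_{i+1}$ and $t_i t_j = t_j t_i$ for $|i-j|>1$,'' and your argument is a careful unpacking of exactly that computation via the slide identity $t_j P_{n-1} = P_{n-1} t_{j-1}$. One tiny wording slip: in your derivation of (a), after the braid move you have $t_1\cdots t_{j-2}\,t_{j-1} t_j t_{j-1}\,t_{j+1}\cdots t_{n-1}$, and the final step is to push the trailing $t_{j-1}$ to the far \emph{right} (past $t_{j+1},\ldots,t_{n-1}$), not ``the resulting $t_j$ to the far left''; the identity itself is stated correctly and the rest of the argument is fine.
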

\begin{proof}
The proof follows easily from the braid relations $t_i t_{i+1} t_i = t_{i+1} t_i t_{i+1}$ 
and the relations $t_it_j=t_jt_i$ for $\mid i-j\mid > 1$. 
\end{proof}

\begin{figure}[hbt]
\begin{center}
      \includegraphics[width=5cm]{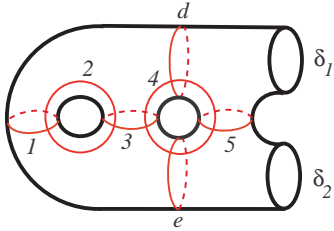}
      \caption{The curve labeled $i$ on $\Sigma_{2,2}$ represents $c_i$.}
      \label{fig:genus-2}
\end{center}
\end{figure}

Suppose that $g\geq 2$ and that the surface $\Sigma_{2,2}$
of genus $2$ illustrated in Figure~\ref{fig:genus-2}
is embedded in $\bS$.

\begin{lemma} \label{lem:diffeo}
Let $g\geq 2$ and let $x$ be a nonseparating simple closed curve on
$\bS$ intersecting $c_3$ and $d$ transversely only once.
In the mapping class group $\mod (\bS)$, the element
 \[
     \phi = t_4 t_3 t_2 t_1 t_1 t_2 t_3 t_4 t_x t_d t_3 t_x
 \]
maps the pair of simple closed curves $(c_3, d)$ to $(e, c_3)$.
\end{lemma}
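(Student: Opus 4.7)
The plan is to verify directly that $\phi(c_3) = e$ and $\phi(d) = c_3$ by applying the twelve Dehn twist factors of $\phi = t_4 t_3 t_2 t_1 t_1 t_2 t_3 t_4 t_x t_d t_3 t_x$ one at a time, right to left, and tracking the image of $c_3$ (respectively $d$) at each step. The computation is purely geometric: applying a single $t_a$ to a curve $c$ amounts to the standard surgery of $c$ with $a$ at each intersection point, and every intermediate curve can be drawn on the embedded subsurface $\Sigma_{2,2}$ of Figure \ref{fig:genus-2}.

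First I would work out the effect of the right tail $\tau = t_x t_d t_3 t_x$ on $c_3$ and on $d$. The only hypotheses on $x$ are that it meets $c_3$ and $d$ transversely in one point each, so the elementary surgery formulas yield explicit intermediate curves. To keep the bookkeeping light, I would use the braid identity $t_\alpha t_\beta t_\alpha = t_\beta t_\alpha t_\beta$ (valid when $\alpha$ and $\beta$ meet once) and the conjugation identity $t_\alpha t_\beta t_\alpha^{-1} = t_{t_\alpha(\beta)}$ to commute factors past each other and absorb twists. Next I would apply the palindromic middle block $\psi = t_4 t_3 t_2 t_1 t_1 t_2 t_3 t_4$ to $\tau(c_3)$ and $\tau(d)$. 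The factorization $\psi = (t_4 t_3 t_2 t_1)(t_1 t_2 t_3 t_4)$ is particularly helpful, since the chain word $t_1 t_2 t_3 t_4$ sends each chain curve to the next in a controlled way; Lemma \ref{lem:chain} then supplies further braid relations for rewriting $\psi$ into whichever form is cleanest for a given curve.

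The main obstacle is the geometric bookkeeping: the intermediate images after twelve surgeries can become visually complicated, and identifying the final output as the specific isotopy class $e$ from Figure \ref{fig:genus-2} (respectively $c_3$) demands careful drawings at each stage. A useful cross-check is to reverse the direction of the verification and compute $\phi^{-1}(e)$ and $\phi^{-1}(c_3)$, or alternatively to verify the equivalent algebraic identities $\phi\, t_{c_3}\, \phi^{-1} = t_e$ and $\phi\, t_d\, \phi^{-1} = t_{c_3}$ via the conjugation formula; either variant reduces the lemma to an explicit, if tedious, surface computation.
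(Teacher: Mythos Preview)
Your proposal is correct and follows essentially the same route as the paper: split $\phi$ into the right tail $\tau = t_x t_d t_3 t_x$ and the palindromic block $\psi = t_4 t_3 t_2 t_1 t_1 t_2 t_3 t_4$, and track the curves through each block. The paper records the key intermediate step you would discover, namely that $\tau$ sends the pair $(c_3,d)$ to $(d,c_3)$ and then $\psi$ sends $(d,c_3)$ to $(e,c_3)$, so your twelve-step computation collapses to two short geometric checks.
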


\begin{proof}
It is easy to see that $t_x t_d t_3 t_x$ maps the pair $(c_3, d)$ to $(d, c_3)$ and
$t_4 t_3 t_2 t_1 t_1 t_2 t_3 t_4$ maps $(d, c_3)$ to $(e, c_3)$.
\end{proof}

Next we express a particular element of $\mod (\bS)$ as a product of arbitrarily large number
of positive Dehn twists.

\begin{proposition} \label{prop:tm}
Let $g\geq 2$ and let $x$ be a nonseparating simple closed curve on
$\bS$ intersecting $c_3$ and $d$ transversely only once.
Let $T= (t_1 t_2 t_3)^2 t_2 t_1 t_3 t_2$ in $\mod (\bS)$. For any positive integer $m$, we have
 \begin{equation*}
 \phi = t_4 t_3 t_2 t_1 t_1 t_2 t_3 t_4 t_x t_d t_3 t_x
  = t_3^{-m} t_e^mt_4 t_3 t_2 t_1 t_1 t_2 t_3 t_4 t_x t_d t_3 t_x t_e^{-m}t_3^m ~T^m.
 \end{equation*}
 In particular, for any positive integer $m$ the element
 $\phi$ may be written as a product of $12+10m$  positive Dehn twists
 about nonseparating simple closed curves.
\end{proposition}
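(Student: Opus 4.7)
The plan is to reduce the displayed identity to the single mapping-class relation
$$ T = t_3^{-2}\, t_d\, t_e $$
in $\mod(\bS)$ and then bootstrap using the conjugation information supplied by Lemma~\ref{lem:diffeo}.

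First, I would apply Lemma~\ref{lem:chainnn} to the length-$3$ chain $c_1, c_2, c_3$: from Figure~\ref{fig:genus-2}, the two boundary components of a regular neighborhood of $c_1 \cup c_2 \cup c_3$ in $\bS$ are precisely $d$ and $e$, so $(t_1 t_2 t_3)^4 = t_d\, t_e$. Using only the braid relations $t_i t_{i+1} t_i = t_{i+1} t_i t_{i+1}$ and the commutation $t_1 t_3 = t_3 t_1$, one verifies
$$ t_2 t_1 t_3 t_2\, t_3^2 = t_2 t_1 (t_3 t_2 t_3) t_3 = (t_2 t_1 t_2)(t_3 t_2 t_3) = (t_1 t_2 t_1)(t_3 t_2 t_3) = t_1 t_2 t_3 t_1 t_2 t_3 = (t_1 t_2 t_3)^2, $$
so that multiplying on the left by $(t_1 t_2 t_3)^2$ yields $T \cdot t_3^2 = (t_1 t_2 t_3)^4 = t_d t_e$. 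Since $c_3, d, e$ are pairwise disjoint ($d$ and $e$ bound a subsurface containing $c_3$), the twists $t_3, t_d, t_e$ pairwise commute, whence $T = t_3^{-2} t_d t_e$ and $T^m = t_3^{-2m} t_d^m t_e^m$ for every $m \geq 1$.

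Next, Lemma~\ref{lem:diffeo} gives $\phi(c_3) = e$ and $\phi(d) = c_3$, which in $\mod(\bS)$ translate to
$$ \phi\, t_3^k = t_e^k\, \phi, \qquad \phi\, t_d^k = t_3^k\, \phi, \qquad k \in \Z. $$
Combining these with the commutativity of $t_3, t_d, t_e$, a direct manipulation gives
\begin{align*}
t_3^{-m} t_e^m\, \phi\, t_e^{-m} t_3^m\, T^m
&= t_3^{-m} (t_e^m \phi)\, t_e^{-m} t_3^m \cdot t_3^{-2m} t_d^m t_e^m \\
&= t_3^{-m} \phi\, t_3^m \cdot t_e^{-m} t_3^{-m} t_d^m t_e^m \\
&= t_3^{-m} \phi\, t_d^m \;=\; t_3^{-m} t_3^m\, \phi \;=\; \phi,
\end{align*}
which is the claimed formula.

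For the final clause, the word on the right contains $m + 8 + 4 + m + 10m = 12 + 12m$ positive and $2m$ negative Dehn twist letters, and every curve that appears (namely $c_1,\ldots,c_4$, $d$, $e$, $x$) is nonseparating in $\bS$. To turn this signed expression into an honest product of $12 + 10m$ positive Dehn twists, I would use the commutations from the preceding step to pair each of the $m$ negative letters $t_3^{-1}$ and each of the $m$ negative letters $t_e^{-1}$ with positive letters produced in $T^m$ via the chain relation $t_3^2 T = t_d t_e$, cancelling $2m$ signed pairs in the free group. The main obstacle I expect is verifying from Figure~\ref{fig:genus-2} that the boundary components of the neighborhood of $c_1 \cup c_2 \cup c_3$ really are $d$ and $e$; once this is in hand, the remaining algebra and the positive-length count are essentially formal.
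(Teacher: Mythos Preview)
Your derivation of the displayed identity is correct and is essentially the paper's argument: both use the two--holed torus relation $(t_1t_2t_3)^4=t_dt_e$ together with braid moves to obtain $T=t_dt_et_3^{-2}$ (the paper writes this as $T=t_dt_3^{-1}t_et_3^{-1}$, which is the same since $t_3,t_d,t_e$ pairwise commute), and then feed the conjugation data $\phi(c_3)=e$, $\phi(d)=c_3$ from Lemma~\ref{lem:diffeo} into the expression. Your braid computation $t_2t_1t_3t_2t_3^2=(t_1t_2t_3)^2$ is a nice compact variant of the paper's two applications of the braid relation.

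The one place your argument does not close is the ``in particular'' clause. Your proposed mechanism, moving the $2m$ negative letters $t_3^{-1},t_e^{-1}$ around via commutation and the relation $Tt_3^2=t_dt_e$ so as to cancel them against positives ``in the free group'', cannot be carried out as stated: $\phi$ sits between the block $t_3^{-m}$ and the block $t_e^{-m}t_3^mT^m$, and $\phi$ commutes with neither $t_3$ nor $t_e$ (indeed $\phi(c_3)=e\neq c_3$), so you cannot slide the negative letters across $\phi$ to reach anything they could cancel with. The paper's argument here is much simpler and avoids this obstruction entirely: since $t_3$ and $t_e$ commute, one has $(t_3^{-m}t_e^m)^{-1}=t_e^{-m}t_3^m$, so $t_3^{-m}t_e^m\,\phi\,t_e^{-m}t_3^m=\psi\phi\psi^{-1}$ with $\psi=t_3^{-m}t_e^m$. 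Writing $\phi=\prod_{i=1}^{12}t_{a_i}$ (all $a_i$ nonseparating), the conjugate is $\prod_{i=1}^{12}t_{\psi(a_i)}$, again a product of $12$ positive Dehn twists about nonseparating curves; multiplying by $T^m$, which is by definition a product of $10m$ positive Dehn twists about nonseparating curves, gives $12+10m$ with no cancellation needed.
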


\begin{proof}
By the two-holed torus relation and by applying the braid relation twice, we get
\begin{eqnarray*}
t_dt_e
      & = &(t_1 t_2 t_3)^4\\
      & =& (t_1 t_2 t_3)^2 t_1 t_2 t_1 t_3 t_2 t_3  \\
      &= & (t_1 t_2 t_3)^2 t_2 t_1 t_2 t_3 t_2 t_3 \\
      &= & (t_1 t_2 t_3)^2 t_2 t_1 t_3 t_2 t_3 t_3 . \\
\end{eqnarray*}
Hence, we have $(t_1 t_2 t_3)^2t_2 t_1 t_3 t_2 = t_d t_3^{-1} t_e t_3^{-1}$.
By taking the $m$th power of both sides, we obtain
$T^m = t_d^m t_3^{-m}t_e^m t_3^{-m}$ for any positive integer $m$. (Note that this equality is used
in~\cite{baykur-korkmaz-monden}.)
By using Lemma~\ref{lem:diffeo}, this leads to
\begin{eqnarray*}
t_4 t_3 t_2 t_1 t_1 t_2 t_3 t_4 t_x t_d t_3 t_x
 &=& (t_4 t_3 t_2 t_1 t_1 t_2 t_3 t_4 t_x t_d t_3 t_x )  t_d^{-m} t_3^{m} t_e^{-m} t_3^{m}T^m\\
 &=& \phi \, t_d^{-m} t_3^{m} t_e^{-m}    t_3^{m}T^m\\
 &=& t_{\phi(d)}^{-m} t_{\phi(3)}^{m} \phi \, t_e^{-m}    t_3^{m}T^m\\
 &=& (t_{3}^{-m} t_{e}^{m} \phi \, t_e^{-m} t_3^{m} ) T^m,
\end{eqnarray*}
where $\phi=t_4 t_3 t_2 t_1 t_1 t_2 t_3 t_4 t_x t_d t_3 t_x$.
Since the conjugate of $\phi$, $t_{3}^{-m} t_{e}^{m} \phi \, t_e^{-m}    t_3^{m}$ is a product of
$12$ positive Dehn twists and $T$ is a product of $10$ positive Dehn twists (all about nonseparating
curves), the proposition is proved.
\end{proof}

Proposition~\ref{prop:tm} gives a short counter example to two conjectures of
Ozbagci and Stipsicz (\cite{ozbagci-stipsicz2}, Conjecture 12.3.16 and Conjecture 15.3.5).

\begin{corollary} \label{cor:infty}
The contact $3$--manifold compatible with the open book whose monodromy is given by
$\phi$ in Proposition~\ref{prop:tm} has Stein fillings with arbitrarily large Euler characteristics.
\end{corollary}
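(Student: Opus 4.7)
The plan is to combine the factorizations produced in Proposition~\ref{prop:tm} with the Stein filling machinery via Lefschetz fibrations referenced in the paper. By that proposition, for every positive integer $m$ the monodromy $\phi \in \mod(\bS)$ admits a factorization into $N_m := 12 + 10m$ positive Dehn twists about nonseparating simple closed curves. By the Giroux correspondence, the open book $(\bS, \phi)$ supports a contact structure $\xi$ on some closed 3-manifold $M$, and the existence of a positive Dehn twist factorization of $\phi$ guarantees that $\xi$ is Stein fillable.

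Next I would invoke the Akbulut--Ozbagci and Loi--Piergallini constructions (already cited in the introduction, together with Etnyre--Honda and Ozbagci--Stipsicz) which associate to each positive factorization of $\phi$ into $N_m$ nonseparating Dehn twists an allowable Lefschetz fibration $f_m : X_m \to D^2$ with regular fiber $\bS$ and exactly $N_m$ singular fibers, whose total space $X_m$ is a Stein filling of the very same contact 3-manifold $(M, \xi)$. Since $X_m$ is obtained from $\bS \times D^2$ by replacing $N_m$ regular fibers with nodal ones, each of which has Euler characteristic one greater than $\chi(\bS)$, we obtain
\[
   \chi(X_m) \;=\; \chi(\bS) + N_m \;=\; \chi(\bS) + 12 + 10m.
\]

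Finally I would let $m \to \infty$ and observe that $\chi(X_m) \to \infty$, so the family $\{X_m\}_{m \ge 1}$ consists of Stein fillings of $(M, \xi)$ with arbitrarily large Euler characteristics, which proves the corollary. There is no genuine obstacle beyond citing the correspondence between positive factorizations and Stein fillings: all the substantive work has already been done in Proposition~\ref{prop:tm}, where the factorizations of unbounded length were constructed; the Euler characteristic count is then automatic from the standard formula for Lefschetz fibrations over the disk.
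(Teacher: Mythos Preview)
Your proposal is correct and follows precisely the approach the paper intends: the corollary is stated without proof because it is immediate from Proposition~\ref{prop:tm} together with the standard Lefschetz fibration/Stein filling machinery (Akbulut--Ozbagci, Loi--Piergallini, Giroux) already invoked in the introduction. Your Euler characteristic computation $\chi(X_m)=\chi(\bS)+12+10m$ for the allowable Lefschetz fibration over $D^2$ is the expected one, and letting $m\to\infty$ gives the conclusion.
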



\section{Surfaces with two boundary components}
We prove our main theorem in this section.
In the proof, our strategy will be to look for the factorization of
$\phi$ given in Lemma~\ref{lem:diffeo} in a
factorization of a given element.
Once we find $\phi$ in the factorization as a subword,
as in Proposition~\ref{prop:tm} it will give us a factorization consisting of
an arbitrarily large number of positive Dehn twists.

\subsection{Proof of the Main Theorem}
Suppose first that $g=2$.
Consider the surface $\Sigma_{2,2}$ of genus $2$ illustrated in Figure~\ref{fig:genus-2}.
By Lemma~\ref{lem:chainnn}, we may write the product $t_{\delta_1} t_{\delta_2}$ of the Dehn twists about the
boundary parallel curves $\delta_1$ and $\delta_2$ as a product
of $30$ positive Dehn twists as
\begin{equation}\label{eqn:two boundary}
t_{\delta_1} t_{\delta_2} = (t_1 t_2 t_3 t_4 t_5)^6.
\end{equation}

Using Lemma~\ref{lem:chain}, we can rewrite Equation~\ref{eqn:two boundary} as
\begin{eqnarray*}
 t_{\delta_1} t_{\delta_2}
   &=& (t_1 t_2 t_3 t_4 t_5)^6 \\
   &=& (t_1 t_2 t_3 t_4 )^5 t_5 t_4 t_3 t_2 t_1 t_1 t_2 t_3 t_4 t_5 \\
   &=& t_1 t_2 t_3 t_4 (t_1 t_2 t_3 t_4 )^4 t_5 t_4 t_3 t_2 t_1 t_1 t_2 t_3 t_4 t_5 \\
   &=& t_1 t_2 t_3 t_4  (t_1 t_2 t_3)^4 t_4 t_3 t_2 t_1 t_5 t_4 t_3 t_2 t_1 t_1 t_2 t_3 t_4 t_5 .
\end{eqnarray*}
Now applying the two--holed torus relation $(t_1 t_2 t_3)^4=t_d t_e$,
a special case of Lemma~\ref{lem:chainnn}, we write
\begin{eqnarray} \label{eqn:3-1}
    t_{\delta_1} t_{\delta_2}
      &=& t_1 t_2 t_3 t_4 t_d t_e t_4 t_3 t_2 t_1 t_5 t_4 t_3 t_2 t_1 t_1 t_2 t_3 t_4 t_5 \nonumber \\
      &=& t_5 t_1 t_2 t_3 t_4 t_d t_e t_4 t_3 t_2 t_1 t_5 (t_4 t_3 t_2 t_1 t_1 t_2 t_3 t_4).
\end{eqnarray}
Reordering the terms by conjugation, we may write
 \begin{equation}\label{eqn:d_9}
    t_{\delta_1} t_{\delta_2}= D_9 (t_4 t_3 t_2 t_1 t_1 t_2 t_3 t_4) t_4 t_d t_3,
  \end{equation}
where $D_9$ is a product of nine positive Dehn twists. More precisely,
\begin{eqnarray*}
D_9
  &=&  (f t_5t_1t_2t_3f^{-1})  (h t_et_4h^{-1}) t_2t_1t_5\\
  &=&  t_{f(c_5)}  \,  t_{f(c_1)} \,  t_{f(c_2)} \,
    t_{f(c_3)}  \, t_{h(e)} \,  t_{h(c_4)}  \,  t_2t_1t_5
\end{eqnarray*}
for $f=(t_4t_dt_3)^{-1}$ and $h=t_3^{-1}$. Here we use the fact that the Dehn twists
$t_{\delta_i}$ are central in the mapping class group.

Multiplying both sides of the equality~(\ref{eqn:d_9}) with $t_4$  gives
\[
   t_{\delta_1} t_{\delta_2}  t_4 = D_9 t_4 t_3 t_2 t_1 t_1 t_2 t_3 t_4 t_4 t_d t_3 t_4 .
\]
Since $c_4$ is a simple closed curve intersecting $c_3$ and $d$ transversely once,
it follows now from Proposition~\ref{prop:tm} that $t_{\delta_1} t_{\delta_2}t_4$
may be written as a product of $21+10m$ positive Dehn twists for any integer $m\geq 1$.

The Dehn twists
$t_{\delta_1}$ and $ t_{\delta_2}$ are central in $\mod(\Sigma_{2,2})$. Since the curve $c_4$
is nonseparating and since any two Dehn twists about nonseparating curves
are conjugate, it follows that
$t_{\delta_1} t_{\delta_2}t_a$
may be written as a product of $21+10m$ positive Dehn twists.

This proves part~(i) of the Main Theorem.

\begin{figure}[hbt]
\begin{center}
      \includegraphics[width=8cm,height=4cm]{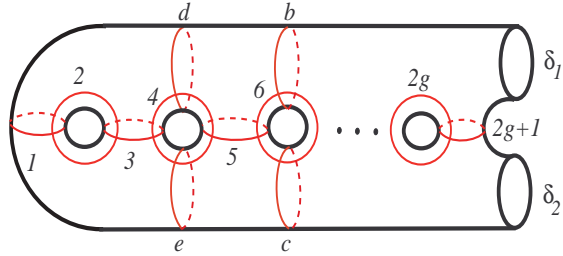}
      \caption{The curve labeled $i$ on $\Sigma_{g,2}$ represents $c_i$.}
      \label{fig:genus-g}
\end{center}
\end{figure}

Suppose now that $g\geq 3$ and that  $\Sigma_{g,2}$ is the surface in Figure~\ref{fig:genus-g}.
In the group $\mod(\Sigma_{g,2})$, using Lemma~\ref{lem:chainnn} we may write
\begin{eqnarray*}
   t_{\delta_1} t_{\delta_2}
     &=& (t_1 t_2 \cdots t_{2g+1})^{2g+2} \\
     &=& (t_1 t_2 t_3 t_4 t_5)^6 t_1 K,
\end{eqnarray*}
where $K$ is a product of positive Dehn twists. In fact, $K$ is a product of
$2(g+1)(2g+1)-31$ positive Dehn twists all are about nonseparating curves.
Since $(t_1 t_2 t_3 t_4 t_5)^6=t_b t_c$, we may write
\[
   t_{\delta_1} t_{\delta_2} = t_b t_c t_1 K .
\]
By part (i), the right--hand side of this equality may be written as a product
of $2(g+1)(2g+1)-10+10m$ positive Dehn twists for any positive integer $m$.

This proves (ii), finishing the proof of the theorem.
\qed

\begin{remark}\label{min gen}
We want to emphasize that $g=3$ is the smallest possible genus
such that the the product of boundary parallel Dehn twists may be written as a product of
arbitrarily large number of positive Dehn twists in the mapping class group of $\Sigma_{g,2}$.
In the case of genus $2$, by capping off one of the boundary components of $\Sigma_{2,2}$, one can immediately
see that in $\mod(\Sigma_{2,1})$, for any nonseparating simple closed curve $a$,
$t_\delta t_a$ can be written as a product of arbitrarily large number
of positive Dehn twists about nonseparating curves.  Now we want to point out that one cannot get
rid of $t_a$ in the above factorization.  Writing $t_\delta$ as a product of
arbitrarily large number of positive Dehn twists would imply the existence of
genus--$2$ Lefschetz fibrations over $S^2$ with section of square $-1$ and with arbitrarily large
Euler characteristic, contradicting~\cite[Theorem~1.4]{smith}.
\end{remark}

\subsection{Proof of Corollary \ref{cor:1}}
We know that the product
 \[
     t_{\delta_1}^k t_{\delta_2}^k = \prod_{i=1}^r t_i
 \]
in $\mod(\Sigma_{g,2})$ describes a genus-$g$ Lefschetz fibration
$ X_g(r) \to S^2$ with two disjoint sections of self-intersection $-k$.
Moreover the Euler characteristic of $X_g(r)$ is given by
 \[
     \chi(X(r))= 2(2-2g) + r.
 \]

Let $g\geq 3$.
For each positive integer $m$, let $X_m$ be the total space of the Lefschetz fibration given by the
factorization of $t_{\delta_1} t_{\delta_2}$ into the product of $2(g+1)(2g+1)+10(m-1)$ Dehn twists.
The Euler characteristic of $X_m$ is
\[ \chi(X_m)= 2(2-2g) + 2(g+1)(2g+1)+10(m-1).\]

Let $g=2$. It is easy to see that
$t_{\delta_1}^2 t_{\delta_2}^2$ can be written as a product of arbitrarily large
number of positive Dehn twists about nonseparating curves: By Equation~(\ref{eqn:3-1}), we have
\begin{eqnarray}
    t_{\delta_1}^2 t_{\delta_2}^2
      &=& (t_1 t_2 t_3 t_4 t_d t_e t_4 t_3 t_2 t_1 t_5 t_4 t_3 t_2 t_1 t_1 t_2 t_3 t_4 t_5)^2.
\end{eqnarray}
It is then easy to see that we may write
\begin{eqnarray}
    t_{\delta_1}^2 t_{\delta_2}^2
      &=& t_4 t_3 t_2 t_1 t_1 t_2 t_3 t_4 t_4 t_d t_3 t_4 D'',
\end{eqnarray}
where $D''$ is a product of $28$ Dehn twists. Hence, by Proposition~\ref{prop:tm},
$t_{\delta_1}^2 t_{\delta_2}^2$ can be written as a product of $40+10m$ Dehn twists for
any $m$. The total space $X_m$ of the corresponding Lefschetz fibration
has the Euler characteristic
 \[ \chi(X_m)= 36+10m.
 \]
The self-intersection number of each of the two sections is $-2$.
\qed


\section{Surfaces with more boundary components}

Our aim in this section is to increase the number of boundary components and write certain elements in the mapping
class group as a product of arbitrarily large number of Dehn twists. Our main tool is the relations obtained in \cite{korkmaz-ozbagci}.

\begin{theorem}
Let $g\geq 3$ and $a$ be a nonseparating simple closed curve on the surface $\Sigma_{g,6}$
of genus $g$ with $6$ boundary components.
In the mapping class group $\mod(\Sigma_{g,6})$, the multitwist
\[
  t_{\delta_1} t_{\delta_2} t_{\delta_3} t_{\delta_4} t_{\delta_5} t_{\delta_6} t_a
\]
can be written as a product of arbitrarily large number of positive
Dehn twists about nonseparating simple closed curves.
\end{theorem}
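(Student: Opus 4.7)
The plan is to follow the strategy of the proof of the Main Theorem: find a positive factorization of the multitwist $t_{\delta_1}t_{\delta_2}\cdots t_{\delta_6}t_a$ that exhibits, as a subword, the boundary multitwist $t_{\delta'_1}t_{\delta'_2}$ of an embedded subsurface $\Sigma'\cong\Sigma_{g,2}\subset\Sigma_{g,6}$, and then invoke part~(ii) of the Main Theorem (available since $g\geq 3$) to replace this subword by arbitrarily long positive products of nonseparating Dehn twists. One needs to check that a curve which is nonseparating in $\Sigma'$ remains nonseparating in $\Sigma_{g,6}$: the planar complement $\Sigma_{g,6}\setminus\Sigma'\cong\Sigma_{0,8}$ is attached to $\Sigma'$ along its two boundary circles, and removing a nonseparating curve from $\Sigma'$ still leaves a connected surface glued to this planar region along both of those circles.

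To produce the initial factorization I would use the Korkmaz--Ozbagci relations of \cite{korkmaz-ozbagci}, which express multitwists on surfaces with many boundary components as positive products of Dehn twists; combined with the chain relations of Lemma~\ref{lem:chainnn} and Lemma~\ref{lem:chain}, and with lantern-type identities supported in the planar region $\Sigma_{0,8}$, one aims at an equality
\[
  t_{\delta_1}t_{\delta_2}\cdots t_{\delta_6}t_a \;=\; W_0 \cdot t_{\delta'_1}t_{\delta'_2},
\]
where $W_0$ is a positive product of Dehn twists about curves nonseparating in $\Sigma_{g,6}$ (these will typically be curves in $\Sigma_{0,8}$ separating the two gluing circles $\delta'_1$ and $\delta'_2$ from one another, hence nonseparating in the ambient surface), and $\delta'_1,\delta'_2$ bound a genus-$g$ subsurface disjoint from the $\delta_i$. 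The role of the extra Dehn twist $t_a$ should be to compensate for the mismatch between the six boundary twists $t_{\delta_i}$ and the two target twists $t_{\delta'_j}$, in analogy with how the extra factor $t_a$ was needed in part~(i) of the Main Theorem.

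Once the above factorization is established, substituting into $W_0\cdot t_{\delta'_1}t_{\delta'_2}$ the factorization provided by the Main Theorem, which for every $m\geq 1$ writes $t_{\delta'_1}t_{\delta'_2}$ as a product of $2(g+1)(2g+1)-10+10m$ positive nonseparating Dehn twists in $\Sigma'$, yields positive factorizations of $t_{\delta_1}\cdots t_{\delta_6}t_a$ of unbounded length. The principal obstacle is the bookkeeping required to pick out the appropriate Korkmaz--Ozbagci relation and to massage it, via braid and commutation relations and via conjugations exploiting the centrality of the boundary twists $t_{\delta_i}$ in $\mod(\Sigma_{g,6})$, into the desired form $W_0\cdot t_{\delta'_1}t_{\delta'_2}$; this is the same concrete kind of manipulation that was used to pass from Equation~(\ref{eqn:3-1}) to Equation~(\ref{eqn:d_9}) in the proof of the Main Theorem.
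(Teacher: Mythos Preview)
Your high–level idea—isolate, inside a positive factorization, a subword that can be replaced by arbitrarily long positive products—is exactly the paper's strategy. However, the concrete plan you outline rests on a topological impossibility, so as written the argument cannot be completed.

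You assert that one can embed $\Sigma'\cong\Sigma_{g,2}$ in $\Sigma_{g,6}$ with connected planar complement $\Sigma_{0,8}$. A glance at Euler characteristics rules this out: if $\partial\Sigma'=\delta'_1\cup\delta'_2$ is disjoint from $\partial\Sigma_{g,6}$, then the complement has $8$ boundary circles and $\chi=-4$, whereas $\chi(\Sigma_{0,8})=-6$. In fact, since gluing two connected surfaces along two circles adds one to the sum of the genera, any connected complement of a genus--$g$ subsurface with two boundary circles would force the ambient genus to exceed $g$. Hence the complement of such a $\Sigma'$ is necessarily disconnected (two planar pieces, one containing each $\delta'_i$), the curves $\delta'_1,\delta'_2$ are separating in $\Sigma_{g,6}$, and \emph{every} simple closed curve in the complement is separating as well. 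Thus there is no pool of ``curves in $\Sigma_{0,8}$ separating $\delta'_1$ from $\delta'_2$'' to build your $W_0$ from, and the reduction to the Main Theorem for $\Sigma'$ does not get off the ground. The Korkmaz--Ozbagci relations you invoke are relations on holed \emph{tori}, not planar surfaces, so they also do not live in the region you have set aside for $W_0$.

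The paper's route is different in detail. It embeds an eight--holed torus $\Sigma_{1,8}\subset\Sigma_{g,6}$ (the complement being $\Sigma_{g-2,2}$, glued along two curves $\delta_7,\delta_8$) and applies the Korkmaz--Ozbagci eight--holed torus relation there to obtain $t_{\delta_1}\cdots t_{\delta_8}=t_{\alpha_5}t_{\alpha_7}D$ with $D$ a product of ten positive twists. The product $t_{\alpha_5}t_{\alpha_7}$ is the chain relation $(t_1\cdots t_{2g-1})^{2g}$ for a chain passing through the complementary $\Sigma_{g-2,2}$; expanding it via Lemma~\ref{lem:chain} produces a factor $t_{\delta_7}t_{\delta_8}$ that cancels, leaving a factorization of $t_{\delta_1}\cdots t_{\delta_6}$. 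After elementary rearrangement and one extra multiplication by $t_d$, the word $\phi=t_4t_3t_2t_1t_1t_2t_3t_4\,t_4t_dt_3t_4$ of Proposition~\ref{prop:tm} appears as a subword, and that proposition (not the Main Theorem) supplies the arbitrarily long replacements. If you want to salvage your reduction--to--a--subsurface idea, the correct target is the $\Sigma_{g-1,2}$ bounded by $\alpha_5,\alpha_7$ (or the $\Sigma_{g-2,2}$ bounded by $\delta_7,\delta_8$), not a $\Sigma_{g,2}$; but even then one must produce the initial positive factorization explicitly, which is precisely the work the paper carries out.
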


\begin{figure}[hbt]
\begin{center}
      \includegraphics[width=12cm]{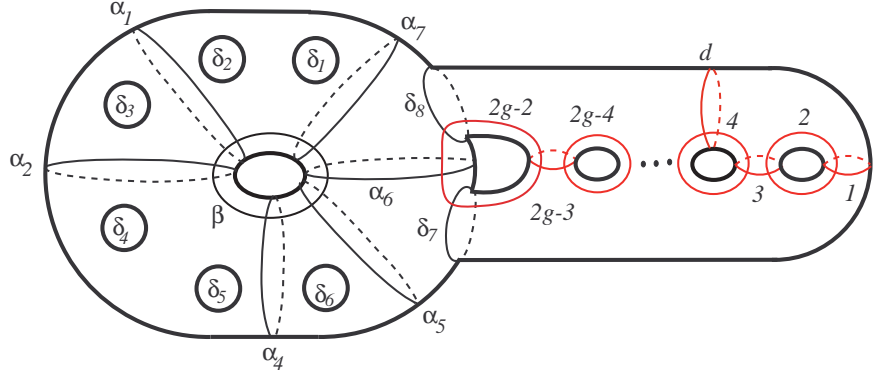}
      \caption{The curve labeled $i$ is $c_i$.}
      \label{fig:genus-1-bdry-8}
\end{center}
\end{figure}

\begin{proof}
Consider the eight-holed torus $\Sigma_{1,8}$ given in Figure~\ref{fig:genus-1-bdry-8}
obtained by cutting $\Sigma_{g,6}$ along $\delta_7\cup\delta_8$.
The eight--holed torus relation given in~\cite{korkmaz-ozbagci} says
 \begin{equation}\label{eqn:8-holed}
   t_{\delta_1} t_{\delta_2} \cdots t_{\delta_8}= t_{\alpha_4} t_{\alpha_5} t_{\beta_1} t_{\sigma_3} t_{\sigma_6}
   t_{\alpha_2} t_{\beta_6} t_{\sigma_4} t_{\sigma_7} t_{\alpha_7} t_{\beta_4} t_{\sigma_5},
\end{equation}
where $t_{\beta_1} = t_{\alpha_1} t_{\beta} t_{\alpha_1}^{-1}$, \ $t_{\beta_4} = t_{\alpha_4} t_{\beta} t_{\alpha_4}^{-1}$ \ and \
$t_{\beta_6} = t_{\alpha_6} t_{\beta} t_{\alpha_6}^{-1}$. Here, we wrote $\delta_{i+1}$ for the curve
$\delta_{i}$ of Figure~$8$ in~\cite{korkmaz-ozbagci}, $\delta_{1}$ for $\delta_{8}$, and the nonseparating curves
$\sigma_i$ on $\Sigma_{1,8}$ are not drawn.

By conjugating with appropriate elements, we may rewrite the equality~(\ref{eqn:8-holed}) as
 \begin{equation}\label{eqn:eqn3}
   t_{\delta_1} t_{\delta_2} \cdots t_{\delta_8}= t_{\alpha_5} t_{\alpha_7} D ,
\end{equation}
where $D$ is a product of $10$ positive Dehn twists.

By letting $\alpha_6=c_{2g-1}$ and by using Lemmas~\ref{lem:chainnn} and~\ref{lem:chain},
we write
\begin{eqnarray*}
 t_{\alpha_5} t_{\alpha_7}
    &=& (t_1 t_2\cdots t_{2g-1})^{2g} \\
    &=&(t_1  t_2\cdots t_{2g-2})^{2g-1} (t_{2g-1} \cdots  t_1 t_1 \cdots t_{2g-1})\\
    &=&(t_1  t_2\cdots t_{2g-3})^{2g-2}
    (t_{2g-2} \cdots   t_1 t_1  \cdots t_{2g-2}) (t_{2g-1} \cdots  t_1 t_1  \cdots t_{2g-1}),
\end{eqnarray*}
where $t_i$ is the Dehn twist about the curve $c_i$. By using Lemma~\ref{lem:chainnn} again,
we get the equality
\begin{equation} \label{eqn:eqn4}
 t_{\alpha_5} t_{\alpha_7}
    = t_{\delta_7} t_{\delta_8}   (t_{2g-2} \cdots   t_1 t_1  \cdots t_{2g-2}) (t_{2g-1} \cdots  t_1 t_1  \cdots t_{2g-1}).
\end{equation}

Now replace the product $t_{\alpha_5} t_{\alpha_7}$ in~(\ref{eqn:eqn3}) with
the right--hand side of~(\ref{eqn:eqn4})
and cancel $t_{\delta_7} t_{\delta_8}$ from both sides. The result is
\begin{equation*}
 t_{\delta_1} t_{\delta_2} \cdots t_{\delta_6}
   = (t_{2g-2} \cdots   t_2 t_1 t_1  t_2 \cdots t_{2g-2}) (t_{2g-1} \cdots  t_2 t_1 t_1  t_2 \cdots t_{2g-1})D
\end{equation*}
which may be rewritten as
\begin{equation}\label{eqn:mmmm}
 t_{\delta_1} t_{\delta_2} \cdots t_{\delta_6}
   = t_4 t_3t_2 t_1 t_1  t_2t_3 t_4t_4 t_3t_4 D'.
\end{equation}
where $D'$ is a product of $8g-7$ positive Dehn twists about nonseparating curves.
Since the each Dehn twist on the right--hand side
of~(\ref{eqn:mmmm}) commutes with the Dehn twists on the left--hand side, by multiplying both sides
by $t_d$ we may write
\begin{equation}\label{eqn:mmmx}
 t_{\delta_1} t_{\delta_2} \cdots t_{\delta_6}t_d
   = t_4 t_3t_2 t_1 t_1  t_2t_3 t_4t_4 t_dt_3t_4 D'=\phi D',
\end{equation}
where $\phi=t_4 t_3t_2 t_1 t_1  t_2t_3 t_4t_4 t_dt_3t_4$.
It follows now from Proposition~\ref{prop:tm} that, for any positive integer $m$,
multitwist $t_{\delta_1} t_{\delta_2} \cdots t_{\delta_6}t_d$
may be written as a product of $8g+5+10m$ positive Dehn twists about
nonseparating simple closed curves.
\end{proof}

\begin{theorem}
    Let $a$ be a nonseparating simple closed curve on $\Sigma_{2,4}$
    of genus $2$ with $4$ boundary components.
    In the mapping class group $\mod(\Sigma_{2,4})$, the multitwist
    \[
    t_{\delta_1} t_{\delta_2} t_{\delta_3} t_{\delta_4} t_a
    \]
    can be written as a product of arbitrarily large number of positive
    Dehn twists about nonseparating simple closed curves.
\end{theorem}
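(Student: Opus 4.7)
The plan is to mirror the proof of the previous theorem, adapting its strategy for $g\geq 3$ and six boundaries to the genus-$2$, four-boundary setting. The target is an identity
\[
t_{\delta_1}t_{\delta_2}t_{\delta_3}t_{\delta_4}t_a = \phi \cdot D
\]
in $\mod(\Sigma_{2,4})$, where $\phi = t_4 t_3 t_2 t_1 t_1 t_2 t_3 t_4 t_x t_d t_3 t_x$ is the element of Lemma~\ref{lem:diffeo} and $D$ is a product of positive Dehn twists about nonseparating simple closed curves; Proposition~\ref{prop:tm} then converts $\phi$ into an arbitrarily long positive factorization and yields the conclusion.

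First I would identify a six-holed torus subsurface $\Sigma_{1,6}\subset\Sigma_{2,4}$, obtained by cutting $\Sigma_{2,4}$ along a non-separating simple closed curve $\gamma$ through a handle. The six boundary components of $\Sigma_{1,6}$ are $\delta_1,\delta_2,\delta_3,\delta_4$ together with the two sides $\gamma_+,\gamma_-$ of the cut, both of which represent $\gamma$ in $\mod(\Sigma_{2,4})$. I would then apply an appropriate six-holed torus relation, analogous to the eight-holed torus relation of~\cite{korkmaz-ozbagci} used in the previous theorem, to obtain
\[
t_{\delta_1}t_{\delta_2}t_{\delta_3}t_{\delta_4}t_{\gamma_+}t_{\gamma_-} = t_\alpha t_\beta \cdot D_0,
\]
where $\alpha,\beta$ bound the regular neighborhood of a 3-chain $c_1,c_2,c_3$ on $\Sigma_{1,6}$ and $D_0$ is a product of positive Dehn twists about nonseparating simple closed curves.

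Next, using Lemma~\ref{lem:chainnn} with $h=1$ (the two-holed torus relation $(t_1 t_2 t_3)^4 = t_d t_e$) and Lemma~\ref{lem:chain}, I would expand $t_\alpha t_\beta$ so as to exhibit the factor $t_{\gamma_+}t_{\gamma_-} = t_\gamma^2$, cancel it from both sides, and obtain $t_{\delta_1}t_{\delta_2}t_{\delta_3}t_{\delta_4}$ as a positive product on the right-hand side. Multiplying by a well-chosen Dehn twist $t_a$ (using the centrality of the boundary twists $t_{\delta_i}$) and following the rearrangement analogous to passing from equation~(\ref{eqn:mmmm}) to~(\ref{eqn:mmmx}) in the previous theorem, I would arrange the resulting product so that the subword $\phi$ appears explicitly, and then invoke Proposition~\ref{prop:tm}.

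The main obstacle is that, in contrast to the $g\geq 3$ case where the chain $c_1,\ldots,c_{2g-1}$ had length at least $5$ and so automatically supplied the curve $c_4$ needed for the pattern of $\phi$, here the relevant chain inside $\Sigma_{1,6}$ has length only $3$. Introducing $c_4$ (or any nonseparating curve meeting $c_3$ and $d$ transversely once) into the expression requires exploiting the additional genus of $\Sigma_{2,4}$ beyond the $\Sigma_{1,6}$ subsurface, via a careful conjugation using the centrality of $t_{\delta_i}$ together with Lemma~\ref{lem:diffeo}; keeping track of this conjugation is the delicate part of the argument.
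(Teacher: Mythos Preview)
Your overall strategy---find a six-holed torus relation, glue two boundaries to pass to $\Sigma_{2,4}$, cancel the resulting $t_\gamma^2$, and isolate the word $\phi$ so that Proposition~\ref{prop:tm} applies---is exactly what the paper does. However, two points in your outline are genuine gaps rather than routine details.

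First, the six-holed torus relation you need is not available off the shelf in~\cite{korkmaz-ozbagci} the way the eight-holed one was; you need it in the specific form in which $t_{\alpha_1}t_{\alpha_3}$ can be brought to the front, with $\alpha_1,\alpha_3$ a particular pair of curves. The paper constructs this relation by hand: it starts from the four-holed torus relation $t_{\delta_1}t_{\delta_2}t_{\delta_3}t_\gamma=(t_\beta t_{\alpha_1}t_{\alpha_3}t_\beta t_{\alpha_2}t_{\alpha_5})^2$, performs a lantern substitution to get a five-holed relation, and then a second lantern substitution to reach six holes. Without this construction your plan has no starting relation to work with. Second, your description of the $3$-chain is inconsistent with your cancellation step. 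If $c_1,c_2,c_3$ lie in $\Sigma_{1,6}$, then expanding $t_\alpha t_\beta=(t_1t_2t_3)^4$ produces only twists about curves disjoint from $\gamma$ and can never exhibit $t_{\gamma_+}t_{\gamma_-}$. In the paper the gluing is chosen so that the glued curve \emph{is} $c_1$; the chain $c_1,c_2,c_3$ lives in $\Sigma_{2,4}$ (with $c_2$ crossing the gluing), and $\alpha_1,\alpha_3$---which are curves on $\Sigma_{1,6}$---become $d,e$ after gluing. One then writes $t_{\alpha_1}t_{\alpha_3}=(t_1t_2t_3)^4=t_1^2\cdot(\text{positive word})$ and cancels $t_1^2=t_\gamma^2$. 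Incidentally, the curve $c_4$ you worry about is simply $\beta$, the core of $\Sigma_{1,6}$; it is already present, and the extra genus is used to produce $c_2$, not $c_4$.
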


\begin{proof} To get a certain relation in the mapping class group on a torus with six holes, we use
the technique of~\cite{korkmaz-ozbagci}.
We start with the four-holed torus relation of \cite{korkmaz-ozbagci} (see Figure \ref{fig:genus-1-bdry-5}~(a)
for the curves):
\begin{equation}\label{eqn:torus4delik}
    t_{\delta_1} t_{\delta_2} t_{\delta_3} t_{\gamma} =
    (t_{\beta} t_{\alpha_1} t_{\alpha_3} t_{\beta} t_{\alpha_2}  t_{\alpha_5})^2.
\end{equation}

\begin{figure}[hbt]
\begin{center}
      \includegraphics[width=13cm]{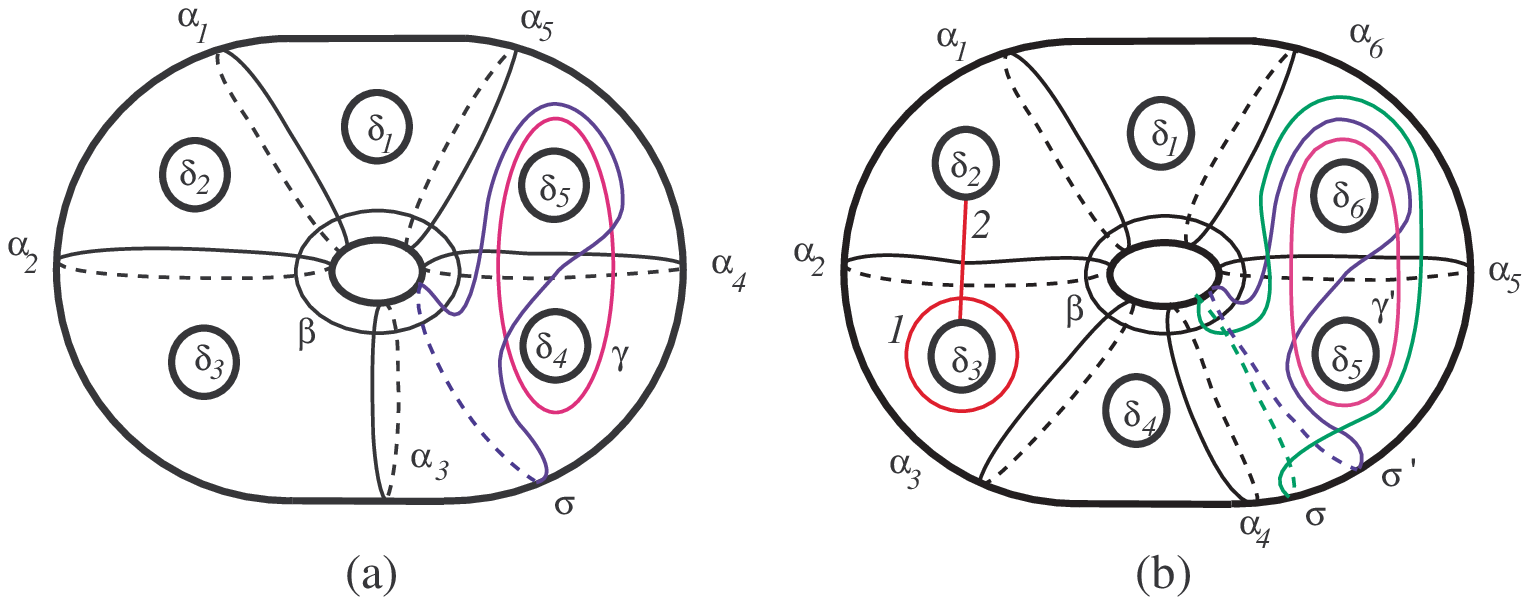}
      \caption{ }
      \label{fig:genus-1-bdry-5}
\end{center}
\end{figure}

Multiplying both sides of~(\ref{eqn:torus4delik}) with $t_{\sigma} t_{\alpha_4}$, we get
\begin{equation}\label{eqn:xyy}
 t_{\delta_1} t_{\delta_2} t_{\delta_3} t_{\gamma} t_{\sigma} t_{\alpha_4} =
 t_{\beta} t_{\alpha_1} t_{\alpha_3} t_{\beta} t_{\alpha_2} t_{\alpha_5} t_{\beta} t_{\alpha_1}
 t_{\alpha_3} t_{\beta} t_{\alpha_2} t_{\alpha_5} t_{\sigma} t_{\alpha_4}.
 \end{equation}
Using the lantern relation $t_{\gamma} t_{\sigma} t_{\alpha_4} =t_{\delta_4} t_{\delta_5} t_{\alpha_3} t_{\alpha_5}$,
from Equation~(\ref{eqn:xyy}) we get
 \begin{eqnarray*}
    t_{\delta_1} t_{\delta_2} t_{\delta_3} t_{\delta_4} t_{\delta_5} t_{\alpha_3} t_{\alpha_5}
     &=& t_{\beta} t_{\alpha_1} t_{\alpha_3} t_{\beta} t_{\alpha_2} t_{\alpha_5} t_{\beta} t_{\alpha_1}
      t_{\alpha_3} t_{\beta} t_{\alpha_2} t_{\alpha_5} t_{\sigma} t_{\alpha_4}\\
     &=&t_{\alpha_3} t_{\beta_3} t_{\alpha_1}  t_{\beta} t_{\alpha_2} t_{\alpha_5} t_{\beta} t_{\alpha_1} t_{\alpha_3}
      t_{\beta} t_{\alpha_2} t_{\sigma} t_{\alpha_4}  t_{\alpha_5},
 \end{eqnarray*}
where $\beta_3= t_{\alpha_3}^{-1}(\beta)$. Canceling  $t_{\alpha_3} t_{\alpha_5}$ from both sides gives us
 \begin{equation}\label{eqn:xxx}
    t_{\delta_1} t_{\delta_2} t_{\delta_3} t_{\delta_4} t_{\delta_5}=
     t_{\beta_{{3}}} t_{\alpha_1} t_{\beta} t_{\alpha_2}
     t_{\alpha_5} t_{\beta} t_{\alpha_1} t_{\alpha_3} t_{\beta} t_{\alpha_2} t_{\sigma}  t_{\alpha_4}.
  \end{equation}

Next we are going to repeat the same procedure for the six-holed torus $\Sigma_{1,6}$
(see Figure \ref{fig:genus-1-bdry-5}~(b)). By the equality~(\ref{eqn:xxx}), we have
\[
   t_{\delta_1} t_{\delta_2} t_{\delta_3} t_{\delta_4} t_{\gamma'}
   =t_{\beta_{{3}}} t_{\alpha_1} t_{\beta} t_{\alpha_2}
   t_{\alpha_6} t_{\beta} t_{\alpha_1} t_{\alpha_3} t_{\beta} t_{\alpha_2} t_{\sigma}  t_{\alpha_4}.
\]
Multiplying both sides with $t_{\sigma'} t_{\alpha_5}$ and using the lantern relation
  \[
  t_{\gamma'} t_{\sigma'} t_{\alpha_5}= t_{\delta_5}
  t_{\delta_6} t_{\alpha_4} t_{\alpha_6}
  \]
 give
\begin{align*}
    t_{\delta_1} t_{\delta_2} t_{\delta_3} t_{\delta_4} t_{\delta_5}
    t_{\delta_6} t_{\alpha_4} t_{\alpha_6}
  &=
    t_{\beta_{{3}}} t_{\alpha_1} t_{\beta} t_{\alpha_2}
    t_{\alpha_6} t_{\beta} t_{\alpha_1} t_{\alpha_3} t_{\beta} t_{\alpha_2}
    t_{\sigma}  t_{\alpha_4} t_{\sigma'} t_{\alpha_5}\\
  &=
    t_{\alpha_6} t_{\beta_{{36}}} t_{\alpha_1} t_{\beta_{6}}
    t_{\alpha_2} t_{\beta} t_{\alpha_1} t_{\alpha_3} t_{\beta}
    t_{\alpha_2} t_{\sigma} t_{\sigma'} t_{\alpha_5} t_{\alpha_4},
\end{align*}
where $\beta_{36}= t_{\alpha_6}^{-1} (\beta_3)$ \
and \ $\beta_{6}= t_{\alpha_6}^{-1} (\beta)$.  Cancel $ t_{\alpha_4}$
and $ t_{\alpha_6}$ from both sides of the equality to get
\begin{equation*}
  t_{\delta_1} t_{\delta_2} t_{\delta_3} t_{\delta_4} t_{\delta_5} t_{\delta_6}
  = t_{\beta_{{3 6}}} t_{\alpha_1} t_{\beta_{6}}
  t_{\alpha_2} t_{\beta} t_{\alpha_1} t_{\alpha_3} t_{\beta}
  t_{\alpha_2} t_{\sigma} t_{\sigma'} t_{\alpha_5},
\end{equation*}
or
\begin{equation}\label{eqn:zzz}
    t_{\delta_1} t_{\delta_2} t_{\delta_3} t_{\delta_4} t_{\delta_5} t_{\delta_6}
  = t_{\alpha_1} t_{\alpha_3}t_{\beta} t_{\alpha_2} t_{\sigma} t_{\sigma'} t_{\alpha_5}
    t_{\beta_{{3 6}}} t_{\alpha_1} t_{\beta_{6}}  t_{\alpha_2} t_{\beta}.
\end{equation}

Now we glue $\delta_2$ to $\delta_3$ to get a surface $\Sigma_{2,4}$ of genus two with
four boundary components. We glue $\delta_2$ to $\delta_3$ so that the curve $c_2$ is closed.
Let us set $c_1=\delta_3$, $c_3=\alpha_2$, $c_4=\beta$ and $\alpha_1=d$. Write
 \[
 t_{\alpha_1} t_{\alpha_3}=(t_1t_2t_3)^4=t_1t_1t_2t_1t_3t_2(t_1t_2t_3)^2
 \]
in the equality~(\ref{eqn:zzz}) and cancel $t_1^2$ from both sides. The result can be written as
\begin{eqnarray}\label{eqn:zcc}
    t_{\delta_1} t_{\delta_4} t_{\delta_5} t_{\delta_6}
  &=& t_2t_1t_3t_2(t_1t_2t_3)^2 t_4 t_3 t_{\sigma} t_{\sigma'} t_{\alpha_5}
    t_{\beta_{{3 6}}} t_d t_{\beta_{6}}  t_3 t_4 \nonumber \\
  &=&  t_2t_1t_4t_3t_2t_1t_2t_3t_1t_2t_3 t_4
     t_3 t_{\sigma} t_{\sigma'} t_{\alpha_5}
     t_{\beta_{36}} t_d t_{\beta_{6}}  t_3.
\end{eqnarray}
Now multiply both sides of the equality~(\ref{eqn:zcc}) with $t_{\beta_{36}}$ and let
 \[
     \phi= t_4 t_3 t_2 t_1 t_1 t_2 t_3 t_4
     t_{\beta_{36}} t_d t_3 t_{\beta_{36}}.
 \]
It is clear that one can write
\[
t_{\delta_1} t_{\delta_4} t_{\delta_5} t_{\delta_6} t_{\beta_{36}}=\phi D,
\]
where $D$ is a product of $9$ positive Dehn twists all about nonseparating curves.
We now apply Proposition~\ref{prop:tm} to conclude that $t_{\delta_1} t_{\delta_4} t_{\delta_5} t_{\delta_6} t_{\beta_{36}}$
is a product of $10m+21$ Dehn twists.

This concludes the proof of the theorem.
\end{proof}

\section{A question}
The results of this paper prompt the following question:

\begin{question}
Let $g\geq 2$ and let $n\geq 3$. In the mapping class group of the oriented surface
$\Sigma_{g,n}$, is it possible to write the multitwist $t_{\delta_1} t_{\delta_2} \cdots t_{\delta_n}$
as a product of positive Dehn twists about nonseparating simple closed curves? If yes,
can it be written as a product of arbitrarily large number of such Dehn twists?
\end{question}

Note that in the case $g=1$, the second author and Ozbagci~\cite{korkmaz-ozbagci}
proved that $t_{\delta_1} t_{\delta_2} \cdots t_{\delta_n}$
can be written as a product of positive Dehn twists about nonseparating simple closed curves
for $n\leq 9$, but it cannot be written if $n\geq 10$. Moreover, whenever
$t_{\delta_1} t_{\delta_2} \cdots t_{\delta_n}$ is a product of positive
Dehn twists about nonseparating simple closed curves, the number of such Dehn twists
must be $12$ for homological reasons.

In the case $g=2$,  Onaran~\cite{onaran} showed that,
in the mapping class group of $\Sigma_{2,n}$, the boundary multitwist $t_{\delta_1} t_{\delta_2} \cdots t_{\delta_n}$
can be written as a product of positive Dehn twists about nonseparating simple closed curves
for $n\leq 8$. In~\cite{tanaka}, Tanaka improved this result to $n=4g+4$ for any $g$.

\bibliographystyle{amsplain}

\begin{thebibliography}{10}

 \bibitem{akbulut-ozbagci}
 S.~Akbulut, B.~Ozbagci
 \emph{On the topology of compact Stein surfaces},
 Int. Math. Res. Not.  \textbf{2002}, no. 15, 769--782.


 \bibitem{auroux-problems}
 D.~Auroux,
 \emph{Mapping class group factorizations and symplectic 4-manifolds: some open problems},
 Problems on mapping class groups and related topics, 123--132, Proc. Sympos. Pure Math.,
 \textbf{74}, Amer. Math. Soc., Providence, RI (2006).


 \bibitem{baykur-korkmaz-monden}
 R.~\.{I}. Baykur, M.~Korkmaz and N.~Monden,
 \emph{Sections of surface bundles and Lefschetz fibrations},
 Trans. Amer. Math. Soc. \textbf{365}, no. 11, (2013), 5999--6016.

 \bibitem{baykur-morris0}
 R.~\.{I}. Baykur and J.~Van Horn-Morris,
 \emph{Families of contact 3-manifolds with arbitrarily large
stein fillings}, arXiv:1208.0528v1.

 \bibitem{baykur-morris}
 R.~\.{I}. Baykur and J.~Van Horn-Morris,
 \emph{Topological complexity of symplectic $4$-manifolds and Stein fillings}, arXiv:1212.1699v1.

 \bibitem{etnyre-honda}
 J.B.~Etnyre, K.~Honda,
 \emph{On symplectic cobordisms},
  Math. Ann.  \textbf{323} (2002), no. 1, 31--39.

 \bibitem{giroux}
 E.~Giroux,
 \emph{G\'{e}om\'{e}trie de contact: de la dimension trois vers les simensions sup\'{e}rieures,[Contact
 geometry: from dimension three to higher dimensions]}, Proceedings of the International
 Congress of Mathematicians, Vol. II, Higher Education Press, Beijing, 2002, 405--414.

 \bibitem{kaloti}
 A.~Kaloti,
 \emph{Stein fillings of planar open books}, arXiv:1311.0208v1.

 \bibitem{korkmaz-survey}
 M.~Korkmaz,
 \emph{Low-dimensional homology groups of mapping class groups: a survey},
 Proceedings of $8$th G\"{o}kova Geometry-Topology Conference,
 Turkish J. Math. \textbf{26} no. 1 (2002),  101--114.


 \bibitem{korkmaz-problems}
 M.~Korkmaz,
 \emph{Problems on homomorphisms of mapping class groups},
 Problems on mapping class groups and related topics, 81--89, Proc. Sympos. Pure Math.,
 \textbf{74}, Amer. Math. Soc., Providence, RI (2006).


 \bibitem{korkmaz-ozbagci}
 M.~Korkmaz and B.~Ozbagci,
 \emph{On sections of elliptic fibrations}, Michigan Math. J.
 \textbf{56} (2008), 77--87 .

 \bibitem{loi-piergallini}
 A.~Loi, R.~Piergallini
 \emph{Compact Stein surfaces with boundary as branched covers of $B^4$},
 Invent. Math.  \textbf{143} (2001), no. 2, 325--348.


 \bibitem{onaran}
 S.~C.~Onaran,
 \emph{On sections of genus two Lefschetz fibrations},
 Pacific J. Math. \textbf{248} (2010), no. 1, 203--216.

 \bibitem{ozbagci-stipsicz}
 B.~Ozbagci and A.~I.~Stipsicz,
 \emph{Contact $3$-manifolds with infinitely many Stein fillings},
 Proc. Amer. Math. Soc. \textbf{132} (2004), no. 5, 1549--1558.

 \bibitem{ozbagci-stipsicz2}
 B.~Ozbagci and A.~I.~Stipsicz,
 \emph{Surgery on contact 3-manifolds and Stein surfaces},
Bolyai Society Mathematical Studies, 13. Springer-Verlag, Berlin; Janos Bolyai Mathematical Society, Budapest, 2004.

\bibitem{plamenevskaya}
O.~Plamenevskaya,
\emph{On Legendrian Surgeries Between Lens Spaces},
Journal of Symplectic Geometry, \textbf{10} (2012), no.2, 165--181 


 \bibitem{smith}
 I.~Smith,
 \emph{Lefschetz pencils and divisors in moduli space},
 Geometry and Topology \textbf{5} (2001), 579--608.

 \bibitem{tanaka}
 S.~Tanaka,
 \emph{On sections of hyperelliptic Lefschetz fibrations},
 Algebr. Geom. Topol. \textbf{12} (2012), no. 4, 2259--2286.

 \bibitem{thurston-winkelnkemper}
 W.~P.~Thurston and H.~E.~Winkelnkemper,
 \emph{On the existence of contact forms}, Proc. Amer. Math. Soc. \textbf{52} (1975), 345--347

 \bibitem{wajnryb-problems}
 B.~Wajnryb,
 \emph{Relations in the mapping class group},
 Problems on mapping class groups and related topics, 115--120, Proc. Sympos. Pure Math.,
 \textbf{74}, Amer. Math. Soc., Providence, RI 2006.
\end{thebibliography}
\providecommand{\bysame}{\leavevmode\hbox
to3em{\hrulefill}\thinspace}
\providecommand{\MR}{\relax\ifhmode\unskip\space\fi MR }
\providecommand{\MRhref}[2]{%
  \href{http://www.ams.org/mathscinet-getitem?mr=#1}{#2}
} \providecommand{\href}[2]{#2}

\end{document}